\theoremstyle{plain}
\newtheorem{theorem}{Theorem}
\newtheorem{corollary}[theorem]{Corollary}
\newtheorem{proposition}[theorem]{Proposition}
\newtheorem{lemma}[theorem]{Lemma}
\newtheorem{conjecture}[theorem]{Conjecture}
\theoremstyle{definition}
\newtheorem{definition}[theorem]{Definition}
\theoremstyle{remark}
\newtheorem{remark}[theorem]{Remark}
\newcommand{\Z}{\mathbb{Z}}
\newcommand{\Q}{\mathbb{Q}}
\newcommand{\R}{\mathbb{R}}
\newcommand{\C}{\mathbb{C}}
\renewcommand{\O}{\mathcal{O}}
\newcommand{\p}{\mathfrak{p}}
\DeclareMathOperator{\ord}{ord}
\DeclareMathOperator{\lcm}{lcm}
\title{On the height of polynomials that split completely over a fixed number field}
\author{Thian Tromp}
\begin{document}

\begin{abstract}
    Let $K/\Q$ be a finite extension. We prove that the minimal height of polynomials of degree $n$ of which all roots are in $K^\times$ increases exponentially in $n$. We determine the implied constant exactly for totally real $K$ and $K$ equal to $\Q(\sqrt{-1})$ or $\Q(\sqrt{-3})$.
\end{abstract}

\maketitle

\section{Introduction}

It is a general empirical phenomenon that the irreducible factors of a polynomial in $\Z[x]$ with small coefficients cannot all have very low degree. Recently, Hajdu, Tijdeman and Varga \cite{hajdu} looked at polynomials in $\Z[x]$ that split completely over $\Q$ and showed the following.

\begin{theorem}[Hajdu-Tijdeman-Varga {\cite[Theorem 1.1]{hajdu}}]
    Let $f = a_nx^n + \ldots +a_1x + a_0 \in \Z[x]$ be a polynomial of degree $n$ of which all roots are in $\Q^\times$. Let $H = \max_{1 \leq i \leq n} |a_i|$. We then have
    \[
    n - o(n) \leq \frac{2}{\log 2} \log H.
    \]
    Moreover, $\frac{2}{\log 2}$ cannot be replaced by a smaller constant.
\end{theorem}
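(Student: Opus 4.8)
The plan is to derive the lower bound on $H$ from a single well-chosen evaluation of $f$ on the unit circle, and to prove optimality with the family $(x^2-1)^{n/2}$.

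First I would clear denominators. Writing each root as $p_j/q_j$ in lowest terms with $q_j \geq 1$ and $p_j \neq 0$, Gauss's lemma gives $f(x) = c\prod_{j=1}^n (q_j x - p_j)$ with $c \in \Z$, $|c| \geq 1$; the point of this normalization is that every linear factor is now an integer polynomial whose two coefficients are coprime and \emph{nonzero}. One's first instinct is to combine the Mahler measure $M(f) = |c|\prod_j \max(|p_j|,|q_j|)$ with $M(f) \leq \|f\|_2 \leq \sqrt{n+1}\,H$, but this is hopeless: the cheapest roots $\pm 1$ contribute a factor $1$ to $M(f)$, so $M(f)$ can be as small as $1$ (take $(x^2-1)^{n/2}$) while $H$ is exponentially large. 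The integrality must be exploited differently.

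The key idea is to evaluate at $x = i$. For nonzero integers $p_j, q_j$ one has $|q_j i - p_j| = \sqrt{p_j^2 + q_j^2} \geq \sqrt 2$, whence
\[
|f(i)| = |c|\prod_{j=1}^n \sqrt{p_j^2 + q_j^2} \geq 2^{n/2}.
\]
On the other hand $|f(i)| = \bigl|\sum_k a_k i^k\bigr| \leq \sum_k |a_k| \leq (n+1)\max_k |a_k|$, so $\max_{0 \leq k \leq n} |a_k| \geq 2^{n/2}/(n+1)$. Taking logarithms yields $n \leq \frac{2}{\log 2}\log H + \frac{2}{\log 2}\log(n+1)$, which is exactly $n - o(n) \leq \frac{2}{\log 2}\log H$. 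For sharpness I would take $f = (x^2-1)^{n/2}$ (with an extra factor $x-1$ when $n$ is odd): its coefficients are $\pm\binom{n/2}{k}$, so $H = \binom{n/2}{\lfloor n/4\rfloor}$, and Stirling gives $\log H = \frac{n}{2}\log 2 - \tfrac12\log n + O(1)$; thus $\frac{2}{\log 2}\log H = n - o(n)$, and no constant smaller than $\frac{2}{\log 2}$ can work.

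I expect the only real wrinkle to be the index range: $H$ omits the constant coefficient $a_0$, whereas the evaluation at $i$ naturally controls the full sum $\sum_{k=0}^n |a_k|$. If $|a_0| \leq \tfrac12 \cdot 2^{n/2}$ this is immaterial, since then $\sum_{k=1}^n |a_k| \geq \tfrac12 \cdot 2^{n/2}$ gives $H \geq 2^{n/2}/(2n)$. Otherwise $a_0$ is itself exponentially large, and I would apply the identical evaluation to the reciprocal polynomial $x^n f(1/x)$, whose roots $1/r_j$ again lie in $\Q^\times$ and whose coefficient sequence is that of $f$ reversed, so that the dominant coefficient is moved to an index in $\{1,\dots,n\}$; the net effect on the displayed asymptotic inequality is nil, being absorbed into $o(n)$. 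Beyond this bookkeeping and the routine central-binomial estimate, there is no substantive obstacle: the entire content sits in the inequality $|q_j i - p_j| \geq \sqrt 2$, which is precisely what turns the integrality of the numerators and denominators of the roots into the exponential lower bound that a geometric (Mahler-measure) argument cannot see.
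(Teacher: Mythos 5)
Your core argument is correct, and it is in substance exactly the mechanism of this paper. The paper itself does not reprove the Hajdu--Tijdeman--Varga theorem (it is quoted from \cite{hajdu}) but recovers it as the case $K=\Q$ of Theorem \ref{t1}(ii): the proof of Lemma \ref{alphabound2} with $w_K=2$ is precisely evaluation of $f$ at the primitive fourth roots of unity $\pm i$; the per-root quantity $\prod_\p\max(1,|\alpha|_\p^2)\cdot|1+\alpha^2|_\C$ is, for $\alpha=p/q$ in lowest terms, exactly your $q^2\cdot\frac{p^2+q^2}{q^2}=p^2+q^2$ (the non-archimedean factor collects the denominator); the totally real case of the proof of Theorem \ref{t1}(ii) bounds this below by $2$ via AM--GM and the product formula, which is your $p^2+q^2\ge 2|pq|\ge 2$; and the sharpness family in Lemma \ref{lowerbound} is the same $f=(x^2-1)^m$ with $H(f^m)\le 2^m$. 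So both the main inequality and the optimality are right, in elementary rather than adelic language.

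The one genuine flaw is the reciprocal-polynomial patch for the omitted constant coefficient. Reversal maps the coefficient multiset of $f$ to itself and moves $a_0$ to the \emph{leading} index of $g(x)=x^nf(1/x)$, so the bound your argument yields for $g$ is on $\max(|a_0|,|a_1|,\dots,|a_{n-1}|)$, which may be attained by $a_0$ itself and then says nothing about $H=\max_{1\le k\le n}|a_k|$; worse, since $f$ has real coefficients, $|g(i)|=|i^nf(-i)|=|f(i)|$, so evaluating $g$ at $i$ is literally the same inequality on the same coefficients and cannot detect which end of the polynomial carries the mass. Hence the case ``$|a_0|>\tfrac12\,2^{n/2}$ but $|a_1|,\dots,|a_n|$ small'' is not excluded as written, and it is not mere bookkeeping: it needs its own argument. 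One short fix: assume $H\le|a_0|$ (else done) and set $s_j=1/r_j\in\Q^\times$, so that $a_k=\pm a_0e_k(s_1,\dots,s_n)$; then the power sum $\sum_j s_j^2=(a_1/a_0)^2\mp 2a_2/a_0\le 3H/|a_0|$, and since the $s_j=q_j/p_j$ are \emph{real}, each term obeys $q_j^2/p_j^2\le 3H/|a_0|$, giving $|p_j|\ge\sqrt{|a_0|/(3H)}$ and hence $|a_0|\ge\prod_j|p_j|\ge(|a_0|/(3H))^{n/2}$, i.e.\ $H\ge\tfrac13|a_0|^{1-2/n}\ge 2^{n/2-O(1)}$ in this case, as needed. (It is also possible that the index range $1\le i\le n$ in the statement is simply a transcription slip for $0\le i\le n$ --- the paper's own Gauss-norm definition has the same off-by-one, which would break Gauss's lemma if taken literally --- and under that reading your first dichotomy case already suffices; but to prove the statement as printed, the reciprocal trick must be replaced by something like the above.)
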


In this paper, we will generalise this theorem to number fields $K$ (finite extensions of $\Q$). We will compare the height (see Section \ref{preliminaries}) of a polynomial that splits completely over $K$ to its degree. More precisely, we will obtain general bounds on the constants defined below, as well as determine their exact value in infinitely many cases.

\begin{definition}\phantomsection\label{defCK}
    For a number field $K$, denote by $C_K$ the infimum of the set of constants $C$ such that for all polynomials of degree $n$ and height $H$ of which all roots are in $K^\times$, we have
    \[
        n - o(n) \leq C \log H
    \]
    (where the infimum of the empty set is defined to be $\infty$).
\end{definition}

The following is the main result of this paper.

\begin{theorem}
    \phantomsection\label{t1}
    Let $K$ be a number field of degree $d$ and containing $w_K$ roots of unity. Then the following statements hold.
    \begin{enumerate}[(i)]
        \item
        \phantomsection\label{t1(i)}
        We have
        \[
            \frac{w_K}{\log 2}\leq C_K \leq \frac{w_K}{\log 2} + \frac{d}{\log M_K},
        \]
        where $M_K$ is the minimal Mahler measure greater than $1$ of the polynomials $f^\alpha_\Z(x)^{[K:\Q(\alpha)]}$ for $\alpha \in K$, where $f^\alpha_\Z$ is the minimal polynomial of $\alpha$ over $\Z$ (see Section \ref{preliminaries}).
        \item
        \phantomsection\label{t1(ii)}
        If $K$ is totally real or equal to $\Q(\sqrt{-1})$ or $\Q(\sqrt{-3})$, we have
        \[
            C_K = \frac{w_K}{\log 2}.
        \]
        This equality does not hold for all number fields $K$.
        \end{enumerate}
\end{theorem}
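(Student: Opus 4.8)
The plan is to split the roots of an integer polynomial $f=a_n\prod_{\beta}(x-\beta)$ (all $\beta\in K^\times$) into the roots of unity, of which there are $n_1$ counted with multiplicity, and the remaining $n_2=n-n_1$ roots, and to estimate the two counts separately against $\log H$. The object I would build everything on is the integer resultant
\[
  R(f):=\mathrm{Res}(x^{w_K}+1,\,f), \qquad |R(f)|=|a_n|^{w_K}\prod_{\beta}\bigl|\beta^{w_K}+1\bigr|=\prod_{\zeta^{w_K}=-1}|f(\zeta)|,
\]
which is a \emph{nonzero} integer: a common root of $x^{w_K}+1$ and $f$ would be a root of unity of $K$ not lying in the group $\mu_{w_K}$ of $w_K$-th roots of unity, which is impossible. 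This $R$ has a two-sided control. From the right-hand expression, each of the $w_K$ factors satisfies $|f(\zeta)|\le\sum_i|a_i|\le (n+1)H$, so $|R(f)|\le\bigl((n+1)H\bigr)^{w_K}$. On the other side $R$ is multiplicative over the factorisation of $f$ into primitive irreducibles $p_j$ over $\Z$ (each $p_j$ splits completely over $K$), and every cyclotomic factor $\Phi_k$ with $k\mid w_K$ contributes $\prod_{\eta}(\eta^{w_K}+1)=2^{\phi(k)}=2^{\deg\Phi_k}$, since $\eta^{w_K}=1$ for each primitive $k$-th root $\eta$.

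For the lower bound $C_K\ge w_K/\log 2$ I would take $f=(x^{w_K}-1)^m$: all roots are $w_K$-th roots of unity, hence in $K^\times$, the degree is $w_Km$, and the largest coefficient is $\binom{m}{\lfloor m/2\rfloor}\sim 2^m\sqrt{2/\pi m}$, so $\log H=m\log 2+O(\log m)$ and $n=w_Km=\tfrac{w_K}{\log 2}\log H+o(n)$, ruling out every smaller constant. For the upper bound in (i) I treat the two parts. Multiplicativity gives $|R(f)|\ge 2^{n_1}$, because the cyclotomic factors contribute $2^{n_1}$ and the remaining factors contribute nonzero integers of modulus $\ge 1$; comparing with $|R(f)|\le((n+1)H)^{w_K}$ yields $n_1\le\tfrac{w_K}{\log 2}\log H+o(n)$. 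For the non-unit roots I use $M(f)\ge\prod_{p_j\ \mathrm{non\text{-}cyclotomic}}M(p_j)^{m_j}$ together with $M(p_j)^{[K:\Q(\alpha)]}\ge M_K$, giving $\log M(p_j)\ge\tfrac{\deg p_j}{d}\log M_K$ and hence $\log M(f)\ge\tfrac{\log M_K}{d}\,n_2$; Landau's inequality $\log M(f)\le\log H+\tfrac12\log(n+1)$ then gives $n_2\le\tfrac{d}{\log M_K}\log H+o(n)$. Adding the two bounds proves $C_K\le\tfrac{w_K}{\log 2}+\tfrac{d}{\log M_K}$.

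Part (ii) rests on upgrading $|R(f)|\ge 2^{n_1}$ to the sharp $|R(f)|\ge 2^{n}$, i.e.\ on showing that \emph{every} irreducible factor $p$ splitting over $K$ with nonzero roots satisfies $|\mathrm{Res}(x^{w_K}+1,p)|\ge 2^{\deg p}$; this forces $\log H\ge\tfrac{\log 2}{w_K}\,n-o(n)$ and hence $C_K\le w_K/\log 2$, which combined with the lower bound gives equality. For totally real $K$ one has $w_K=2$ and all roots real, and the inequality is the arithmetic–geometric mean estimate $\beta^2+1\ge 2|\beta|$ applied to $\mathrm{Res}(x^2+1,p)=a^2\prod_\beta(\beta^2+1)\ge 2^{\deg p}|a\,a_0|\ge 2^{\deg p}$, where $a,a_0$ are the leading and constant coefficients. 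For $K=\Q(\sqrt{-1})$ and $K=\Q(\sqrt{-3})$ (with $w_K=4,6$) I would prove the corresponding inequality for $x^{w_K}+1$ by a direct computation exploiting the arithmetic of $\Z[i]$ and the Eisenstein integers $\Z[\tfrac{1+\sqrt{-3}}2]$. Finally, to show equality can fail I would exhibit a field $K$ (outside the listed families) containing an algebraic number of small Mahler measure whose minimal polynomial $p$ splits over $K$ and satisfies $\max_{|x|=1}|p(x)|<2^{\deg p/w_K}$, a Salem number such as Lehmer's being the natural candidate; then $p^m$ has degree $m\deg p$ and $\log H(p^m)=m\log\max_{|x|=1}|p|+o(m)$, giving a degree-to-$\log H$ ratio $>w_K/\log 2$.

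The step I expect to be the main obstacle is precisely the sharp resultant inequality $|\mathrm{Res}(x^{w_K}+1,p)|\ge 2^{\deg p}$ underlying (ii). The Mahler measure is blind to roots of unity, so the genuine lower bound on $H$ must be extracted from the archimedean sizes $|\beta^{w_K}+1|$, and these become small when a root $\beta\in K$ lies close to a $2w_K$-th root of unity. Controlling this is exactly what separates the favourable fields—where a convexity/AM–GM argument survives because the roots are confined (to the real line, or to the lattices of $\Z[i]$ and the Eisenstein integers)—from the general case, and it is the same phenomenon that lets a Salem number produce a genuine counterexample.
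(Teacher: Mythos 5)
Your plan is, at its core, the paper's own proof in integral--resultant clothing: for $\alpha\in K$ with primitive minimal polynomial $p\in\Z[x]$ of degree $m$, the quantity the paper bounds, $\prod_\p\max(1,|\alpha|_\p^{w_K})\prod_\sigma|1+\alpha^{w_K}|_\sigma$, equals $|\mathrm{Res}(x^{w_K}+1,p)|^{d/m}$, so your sharp inequality $|\mathrm{Res}(x^{w_K}+1,p)|\ge 2^{\deg p}$ is exactly the paper's per-element inequality $\ge 2^d$; the split $n=n_1+n_2$, the $M_K$/Landau step, the lower bound via powers of $x^{w_K}-1$, and the AM--GM argument for totally real $K$ all coincide with the paper's. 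The genuine gap is that for $K=\Q(\sqrt{-1})$ and $K=\Q(\sqrt{-3})$ you leave the sharp inequality to ``a direct computation exploiting the arithmetic of $\Z[i]$ and the Eisenstein integers'', and that computation \emph{is} the content of part (ii) there, not a routine check. The paper writes $\alpha=\beta/\gamma$ with $\beta,\gamma$ coprime in the UFD $\O_K$, reduces to showing $|N(\beta^{w_K}+\gamma^{w_K})|\ge 4$, proves via the binomial theorem that fourth powers in $\Z[\zeta_4]$ are $0,1\bmod 4$ (sixth powers $0,1\bmod 3$ in the Eisenstein case), enumerates the lattice points of norm $<4$ in the admissible residue classes (only $0,1$, resp.\ $0,\pm1$), and then eliminates the residual cases by a unit/factorisation analysis, e.g.\ $(\beta^2-\zeta_4\gamma^2)(\beta^2+\zeta_4\gamma^2)=1$ forces $\beta=0$ or $\gamma=0$. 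You correctly flag this as the main obstacle (roots of $K$ drifting toward primitive $2w_K$-th roots of unity), but flagging it is not closing it. Likewise your final step is only a candidate: for Lehmer's polynomial $L$ you need $\max_{|z|=1}|L(z)|<2^{10/w_K}$ for the splitting field's $w_K$, which you have not verified and which genuinely fails if $w_K$ is large (since $\max_{|z|=1}|L(z)|\ge\|L\|_2=3$, the criterion already fails for $w_K\ge 7$). The paper avoids all field computations by taking $K=\Q(\sqrt{-2})$ and $f=\left((x^2-1)(x^2+2)\right)^2=x^8+2x^6-3x^4-4x^2+4$, giving $C_K\ge 8/\log 14>2/\log 2$ directly.

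A second, smaller gap is one of scope: you quantify over $f\in\Z[x]$, but $C_K$ concerns polynomials with coefficients in $K$ (the height is defined adelically for $f\in K[x]$), and for such $f$ your $R(f)$ is not an integer and the factorisation into $\Z$-irreducibles $p_j$ is unavailable. This is precisely why the paper argues place by place: it bounds $\prod_\sigma\prod_j|f(\zeta_{2w_K}^{2j-1})|_\sigma$ against $\prod_\sigma|f|_\sigma^{w_K}$ over the quadratic extension $K(\zeta_{2w_K})$ and multiplies in $\prod_\p|f|_\p^{w_K}$, the finite places absorbing the denominators that your factor $|a_n|^{w_K}$ handles over $\Z$. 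Your route is repairable --- pass to $\prod_\tau f^\tau\in\Q[x]$ (root-of-unity roots remain $w_K$-th roots of unity and Mahler measures of conjugates agree), or rerun your resultant argument with norms from $\O_K$ after clearing denominators --- but as written the upper bounds in (i) and (ii) are established only for rational-coefficient polynomials. The pieces you do carry out in full (the lower bound $w_K/\log 2\le C_K$, the $n_1$ and $n_2$ estimates over $\Z[x]$, and the totally real case of (ii)) are correct and match the paper's arguments.
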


We will discuss some of the rich literature on (bounds of) Mahler measures in Section \ref{preliminaries}.

Our techniques also work in a bit more generality, when $K$ is replaced by other subsets of $\overline{\Q}$. As an example, we give the following.

\begin{theorem}
    \phantomsection\label{t2}
    Let $k$ be a positive integer. There exists a constant $C$ such that for all polynomials $f \in \overline{\Q}[x]$ of degree $n$ and height $H$ of which all roots are nonzero and have degree at most $k$ over $\Q$, we have
    \[
    n - o(n) \leq C \log H.
    \]
\end{theorem}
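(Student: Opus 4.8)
The plan is to bound $\log H$ below by a positive multiple of $n$. Recall that the height is the absolute Weil height of the coefficient vector, so that, for a number field $L$ containing the coefficients and all roots $\alpha_1,\dots,\alpha_n$ of $f=a\prod_i(x-\alpha_i)$, and with local degrees $d_v=[L_v:\Q_v]$,
\[
\log H(f)=\sum_{v}\tfrac{d_v}{[L:\Q]}\log\max_i|a_i|_v .
\]
At a finite place $v$ this local term equals $\log|a|_v+\sum_i\log^+|\alpha_i|_v$ by Gauss's lemma (writing $\log^+t=\max(0,\log t)$), while at an archimedean place $\sigma$ the estimate $|\sigma f(z)|\le(n+1)\max(1,|z|)^n\,\lVert\sigma f\rVert_\infty$ gives, for any point $z_\sigma\in\C$, a lower bound for $\log\lVert\sigma f\rVert_\infty$ in terms of $\log|\sigma f(z_\sigma)|$. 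Summing over all places, the leading coefficient cancels by the product formula, and since the resulting inequality holds for every choice of the $z_\sigma$ I may average the right-hand side over $z_\sigma$ drawn from a probability measure $\nu$ on the unit circle. This yields
\[
\log H(f)\ \ge\ \sum_{i=1}^n\Big(\sum_{v\nmid\infty}\tfrac{d_v}{[L:\Q]}\log^+|\alpha_i|_v+\sum_{\sigma}\tfrac{d_\sigma}{[L:\Q]}\big(U^\nu(\sigma\alpha_i)-\textstyle\int\log^+|z|\,d\nu\big)\Big)-\log(n+1),
\]
where $U^\nu(\beta)=\int\log|\beta-z|\,d\nu(z)$ is the logarithmic potential of $\nu$. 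Everything now reduces to choosing $\nu$ well, and I would combine two choices.

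First split the roots into the $r$ that are roots of unity and the $s=n-r$ that are not. Because the $\alpha_i$ have degree at most $k$, the roots of unity among them lie in the \emph{fixed finite} set $\Omega_0$ of all roots of unity of degree $\le k$, and Northcott's theorem together with Kronecker's theorem furnishes a constant $c_k>0$ with $h(\alpha)\ge c_k$ for every nonzero non-root-of-unity $\alpha$ of degree $\le k$. Taking $\nu$ uniform on the unit circle, Jensen's formula gives $U^\nu(\beta)=\log^+|\beta|$ and $\int\log^+|z|\,d\nu=0$, so the bracket above is exactly $h(\alpha_i)$; hence $\log H(f)\ge\sum_i h(\alpha_i)-\log(n+1)\ge c_k s-\log(n+1)$, which controls the non-roots-of-unity.

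The decisive step is the roots-of-unity contribution, which $\sum_i h(\alpha_i)$ cannot see. Here I would take $\nu$ to be an absolutely continuous measure on the unit circle with bounded density, supported away from $\Omega_0$, chosen so that $U^\nu(\omega)\ge\delta>0$ for all $\omega\in\Omega_0$ while $U^\nu\ge -B$ on all of $\C$; then the displayed inequality gives $\log H(f)\ge\delta r-Bs-\log(n+1)$. Eliminating $s$ between this and $\log H(f)\ge c_k s-\log(n+1)$ produces $\log H(f)\ge c\,n-o(n)$ for some $c=c(k)>0$, whence $n-o(n)\le C\log H$ with $C=1/c$. The main obstacle is the construction of this $\nu$: one must exhibit a bounded-density measure on the unit circle whose potential is simultaneously bounded below everywhere (so that non-roots-of-unity lying near the circle cost only $O(s)$) and strictly positive on the finite set $\Omega_0$. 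I expect this from a perturbation of the uniform measure—deleting a small arc of radius $\varepsilon$ around each point of $\Omega_0$ raises the potential there by $\gg\varepsilon|\log\varepsilon|$ while perturbing it elsewhere by only $O(\varepsilon)$—but verifying both properties at once, uniformly over all multiplicity patterns on $\Omega_0$, is precisely where the bounded-degree hypothesis (confining the roots of unity to a fixed finite set) enters, in exact parallel with the role of the $w_K$ roots of unity in Theorem \ref{t1}.
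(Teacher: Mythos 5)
Your proposal is correct and takes a genuinely different route from the paper. The paper's sketch of Theorem \ref{t2} is, in effect, a discrete version of your key step: it fixes $w=\lcm\{\ell \mid \phi(\ell)\le k\}$ and mimics Lemma \ref{alphabound2} and Corollary \ref{bound2}, i.e.\ it averages $\log|f|$ over the $w$-th roots of $-1$ (an atomic measure on the circle), using the identity $\prod_{j=1}^{w}(\alpha-\zeta_{2w}^{2j-1})=\alpha^w+1$ and the product formula to show that $\prod_\p\max(1,|\alpha|^w_\p)\prod_\sigma|1+\alpha^w|_\sigma\ge 1$ for every $\alpha$, with value $2^d$ when $\alpha$ is a root of unity. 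This global product-formula argument is what lets the paper get a contribution of $\tfrac{\log 2}{w}$ per root of unity and, crucially, a contribution $\ge 0$ (rather than your $-B$) from every other root, even though the atomic measure's potential is pointwise unbounded below; your continuous $\nu$ instead pays the loss $-Bs$, which you absorb, exactly as the paper also must, by a Lehmer-type lower bound on heights of non-roots-of-unity of degree $\le k$ (the paper uses Voutier's explicit Proposition \ref{voutier}, you use Northcott plus Kronecker, which is valid but non-effective), and both arguments finish by the same two-inequality combination as in Theorem \ref{t1}(i). The one step you leave open, the construction of $\nu$, works exactly as you predict, and your uniformity worry is a non-issue: the lower bound is additive over roots, so multiplicity patterns on $\Omega_0$ are irrelevant, and $\Omega_0$ is Galois-stable so all archimedean conjugates of your unit roots stay in it. Deleting arcs of angular radius $\varepsilon$ around the finitely many points of $\Omega_0$ and renormalizing raises the potential at each such point by $\asymp\varepsilon\log(1/\varepsilon)$ while changing it by $O(\varepsilon)$ elsewhere, and any probability measure on the circle with density bounded by $M$ satisfies $U^\nu\ge -cM$ for an absolute constant $c$, since $\int_0^{2\pi}\log^-|z-e^{i\theta}|\,d\theta$ is bounded uniformly in $z\in\C$; this is a fair level of detail given that the paper's own proof is itself only a sketch. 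What the paper's route buys is explicit constants and no potential theory; what yours buys is robustness, since it never needs the relevant roots of unity to satisfy a common algebraic relation such as $\zeta^w=1$, only that they lie in a fixed finite subset of the unit circle, which makes the role of the bounded-degree hypothesis especially transparent.
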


This last theorem gives a precise statement which partly explains the phenomenon described at the start of this introduction. The challenge of finding the true value of $C_K$ for more number fields $K$ or other sets remains.

In Section \ref{preliminaries} we will discuss the relevant background knowledge and statements of results in our context. In Section \ref{mainresults} we will prove the theorems we have discussed in this introduction.

\section{Preliminaries}\label{preliminaries}
    In this section, we discuss the known definitions and results that we need. We will freely use some basic notions in algebraic number theory. Most of the material in this section is covered in more depth in \cite[Chapter 1]{bombieri}, although we deviate slightly from the notation there in some cases.

    Throughout this section, let $K$ be a number field of degree $d$ and with $w_K$ roots of unity. We consider the following normalisation of the absolute values on $K$. For each nonzero prime ideal $\p$ of the ring of integers $\O_K$ of $K$, we define the discrete absolute value $|\cdot|_\p$ by
    \[
        |x|_\p = N(\p)^{-\ord_\p(x)}
    \]
    for all $x \in K^\times$, where $N(\p)=[\O_K : \p]$ is the norm of $\p$ and $\ord_\p(x)$ is the exponent of $\p$ in the factorisation of $(x)$ into prime ideals of $\O_K$. For each embedding $\sigma\colon K \to \C$, we define the archimedean absolute value $|\cdot|_\sigma$ by
    \[
        |x|_\sigma = |\sigma(x)|_\C,
    \]
    where $|\cdot|_\C$ is our notation for the usual absolute value on $\C$. These absolute values obey the so-called product formula.

    \begin{proposition}[Product formula]
        For each $x \in K^\times$, we have
        \[
            \prod_{\p}|x|_\p \prod_{\sigma}|x|_\sigma = 1,
        \]
        where the products run over all nonzero prime ideals $\p$ in $\O_K$ and all embeddings $\sigma \colon K \to \C$ respectively.
    \end{proposition}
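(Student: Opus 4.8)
The plan is to express both the archimedean and the non-archimedean parts of the product in terms of the field norm $N_{K/\Q}(x) = \prod_\sigma \sigma(x) \in \Q^\times$ and to observe that the two parts are mutual inverses, so that their product is forced to be $1$. Everything reduces to a single identity relating the absolute ideal norm of the principal ideal $(x)$ to the ordinary absolute value of $N_{K/\Q}(x)$.

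For the archimedean places, the embeddings appearing in the product are precisely the factors of the norm, so directly
\[
    \prod_\sigma |x|_\sigma = \prod_\sigma |\sigma(x)|_\C = \Bigl|\prod_\sigma \sigma(x)\Bigr|_\C = |N_{K/\Q}(x)|_\C.
\]
For the finite places, I would use the factorisation of the fractional ideal $(x) = \prod_\p \p^{\ord_\p(x)}$ together with the multiplicativity of the absolute ideal norm $N$ to write
\[
    \prod_\p |x|_\p = \prod_\p N(\p)^{-\ord_\p(x)} = \Bigl(\prod_\p N(\p)^{\ord_\p(x)}\Bigr)^{-1}.
\]
The crux is the identity $\prod_\p N(\p)^{\ord_\p(x)} = |N_{K/\Q}(x)|_\C$, i.e.\ that the absolute norm of the principal ideal $(x)$ equals the ordinary absolute value of the field norm of $x$. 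Granting this, the non-archimedean product equals $|N_{K/\Q}(x)|_\C^{-1}$, which is exactly the reciprocal of the archimedean product computed above; the two cancel and the total product equals $1$.

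I expect the norm identity $\prod_\p N(\p)^{\ord_\p(x)} = |N_{K/\Q}(x)|_\C$ to be the only step requiring genuine input, and hence the main obstacle. For $x \in \O_K \setminus \{0\}$ it is the standard fact $N((x)) = [\O_K : x\O_K] = |N_{K/\Q}(x)|_\C$, proved by comparing the index of $x\O_K$ with the absolute value of the determinant of multiplication by $x$ on $\O_K$ viewed as a $\Z$-lattice; for general $x \in K^\times$ one writes $x = a/b$ with $a,b \in \O_K$ and extends multiplicatively. Equivalently, one can bypass this identity by grouping the primes $\p$ lying over each rational prime $p$, using $N(\p) = p^{f(\p/p)}$ together with $\ord_p(N_{K/\Q}(x)) = \sum_{\p \mid p} f(\p/p)\,\ord_\p(x)$, where $f(\p/p)$ is the residue degree, and then invoking the classical product formula over $\Q$ (which itself follows at once from unique factorisation) applied to $N_{K/\Q}(x) \in \Q^\times$.
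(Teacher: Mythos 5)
Your proposal is correct. Note that the paper itself offers no proof of this proposition: it is stated as standard background, with the reader referred to \cite[Chapter 1]{bombieri}, so there is no in-paper argument to compare against. Your argument is precisely the classical one found in such references: reduce everything to the identity $N((x)) = |N_{K/\Q}(x)|_\C$ between the absolute norm of the fractional ideal $(x)$ and the field norm, prove it for $x \in \O_K \setminus \{0\}$ via $[\O_K : x\O_K] = |\det(m_x)|$ for multiplication by $x$ on the $\Z$-lattice $\O_K$, and extend multiplicatively. You have correctly isolated this as the only step with real content, and your alternative route --- grouping primes above each rational $p$ via $\ord_p(N_{K/\Q}(x)) = \sum_{\p \mid p} f(\p/p)\,\ord_\p(x)$ and quoting the product formula over $\Q$ --- is equally valid and has the merit of making the reduction to unique factorisation in $\Z$ explicit. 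One point worth making explicit, which your computation silently relies on and which the paper flags as a nonstandard convention: the product $\prod_\sigma$ runs over all $d$ embeddings $K \to \C$, with a complex embedding and its conjugate counted separately, and this is exactly what makes $\prod_\sigma |x|_\sigma = |N_{K/\Q}(x)|_\C$ hold on the nose (with the more common convention of places one would instead pick up local degrees as exponents). You should also remark, for completeness, that $\ord_\p(x) = 0$ for all but finitely many $\p$, so the infinite product over primes is genuinely a finite product.
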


    For the rest of the paper, products over $\p$ or $\sigma$ are implied to be over all nonzero prime ideals $\p$ of $\O_K$ and embeddings $\sigma \colon K \to \C$, unless otherwise stated. Notice that for an embedding $\sigma\colon K \to \C$ with $\sigma(K) \not\subset \R$, the conjugate embedding $\overline{\sigma}$ gives rise to the same absolute value, which thus appears twice in such a product. Since we will explicitly analyse the embeddings, we keep using this (somewhat nonstandard) notation.

    The absolute values are used to define the height of a polynomial with coefficients in $K$ as follows.

    \begin{definition}
        Let $f = a_nx^n + \ldots + a_1x+a_0 \in K[x]$ be a polynomial and let $|\cdot|_v$ be an absolute value on $K$. We define the Gauss norm $|f|_v$ of $f$ as
        \[
            |f|_v = \max_{1 \leq i \leq n} |a_i|_v.
        \]
        We define the height $H(f)$ of $f \in K[x]$ through
        \[
            H(f)^d = \prod_\p |f|_\p \prod_\sigma |f|_\sigma.
        \]
    \end{definition}

    The height as defined above does not actually depend on the underlying number field $K$ (as long as it contains all coefficients, of course).

    Gauss's lemma says that the discrete Gauss norms are multiplicative.

    \begin{lemma}[Gauss's lemma, {\cite[Lemma 1.6.3]{bombieri}}]
        For each prime $\p$ of $K$ we have
        \[
            |fg|_\p = |f|_\p|g|_\p.
        \]
        In particular, if $f = \prod_{i=1}^n(x-\alpha_i)$ we have
        \[
            |f|_\p = \prod_{i=1}^n \max(1, |\alpha_i|_\p).
        \]
    \end{lemma}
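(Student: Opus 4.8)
The plan is to establish the multiplicativity $|fg|_\p = |f|_\p|g|_\p$ first, and then deduce the product formula for $f = \prod_i(x-\alpha_i)$ by induction. The essential input is that $|\cdot|_\p$ is non-archimedean: being of the form $N(\p)^{-\ord_\p(\cdot)}$, it satisfies the ultrametric inequality $|x+y|_\p \leq \max(|x|_\p, |y|_\p)$, with equality whenever $|x|_\p \neq |y|_\p$. Writing $f = \sum_i a_i x^i$, $g = \sum_j b_j x^j$ and $fg = \sum_k c_k x^k$ with $c_k = \sum_{i+j=k} a_i b_j$, both inequalities can be read off from this one property.

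The inequality $|fg|_\p \leq |f|_\p|g|_\p$ is the routine direction. For each $k$, applying the ultrametric inequality to the sum defining $c_k$ gives $|c_k|_\p \leq \max_{i+j=k}|a_i|_\p|b_j|_\p \leq |f|_\p|g|_\p$, since every $|a_i|_\p \leq |f|_\p$ and every $|b_j|_\p \leq |g|_\p$. Taking the maximum over $k$ yields the claim.

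The reverse inequality is the heart of the lemma, and is where I expect the only real subtlety. I would let $r$ be the least index with $|a_r|_\p = |f|_\p$ and $s$ the least index with $|b_s|_\p = |g|_\p$, and then examine the single coefficient $c_{r+s} = a_r b_s + \sum_{\substack{i+j=r+s\\ i<r}} a_i b_j + \sum_{\substack{i+j=r+s\\ i>r}} a_i b_j$. In the first sum $i<r$ forces $|a_i|_\p < |f|_\p$ by minimality of $r$, so each such term has absolute value $<|f|_\p|g|_\p$; in the second sum $i>r$ forces $j<s$, hence $|b_j|_\p < |g|_\p$ by minimality of $s$, and again the term is $<|f|_\p|g|_\p$. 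Thus $a_rb_s$ is the unique term of maximal absolute value $|f|_\p|g|_\p$, and the strict form of the ultrametric inequality gives $|c_{r+s}|_\p = |a_rb_s|_\p = |f|_\p|g|_\p$. Consequently $|fg|_\p \geq |c_{r+s}|_\p = |f|_\p|g|_\p$. The one point demanding care is exactly this \emph{non-cancellation of the dominant term}: choosing the least indices $r,s$ is precisely what guarantees the strict inequalities for all competing terms, so that no accidental cancellation can lower the valuation of $c_{r+s}$.

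For the final assertion I would induct on $n$ using multiplicativity, reducing to the linear case $|x-\alpha_i|_\p = \max(1, |\alpha_i|_\p)$. This is immediate from the definition of the Gauss norm together with $|1|_\p = 1$, which holds for any non-archimedean absolute value. Applying multiplicativity to $f = \prod_{i=1}^n(x-\alpha_i)$ then gives $|f|_\p = \prod_{i=1}^n \max(1, |\alpha_i|_\p)$ at once.
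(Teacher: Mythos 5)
Your proof is correct and is essentially the standard argument: the paper itself does not prove this lemma but cites it from \cite[Lemma 1.6.3]{bombieri}, whose proof is the same one you give (ultrametric bound for $|fg|_\p \leq |f|_\p|g|_\p$, then the least-index trick making $a_rb_s$ the strictly dominant term in $c_{r+s}$ so no cancellation can occur, then induction over linear factors using $|x-\alpha|_\p = \max(1,|\alpha|_\p)$). Your handling of the one delicate point --- why minimality of $r$ and $s$ forces every competing term in $c_{r+s}$ to be strictly smaller, so the strict form of the ultrametric inequality applies --- is exactly right.
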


    This proposition shows that discrete Gauss norms of polynomials are easy to calculate from their roots (which is what we are concerned with). For the archimedean Gauss norms, this is not as easy and to handle these cases, we consider the so-called Mahler measure of a complex polynomial.

    \begin{definition}
        The Mahler measure $M(f)$ of a polynomial $f = a(x-\alpha_1)\cdots (x-\alpha_n) \in \C[x]$ is defined as
        \[
        M(f) = a\prod_{i=1}^n \max(1, |\alpha_i|_\C).
        \]
    \end{definition}

    It turns out that the Mahler measure is related to a complex line integral over the unit circle; more precisely, we have $M(f) = \exp(\int_0^{1}\log(f(e^{2\pi i t}))dt)$. This identification allows us to give estimates on the usual complex Gauss norm of a polynomial. In particular, we have the following.

    \begin{proposition}[{\cite[Lemma 1.6.7]{bombieri}}]
        \phantomsection\label{complexmahler}
        For any polynomial $f \in \C[x]$ we have
        \[
            |f|_\C \geq M(f)(n+1)^{-1/2}.
        \]
    \end{proposition}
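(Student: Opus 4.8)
The plan is to sandwich the Mahler measure and the Gauss norm between the $L^2$-norm of $f$ on the unit circle. Writing $f = \sum_{i=0}^n a_i x^i$ and setting $\|f\|_2 = \left(\int_0^1 |f(e^{2\pi i t})|^2\, dt\right)^{1/2}$, I would aim to establish the two inequalities $M(f) \leq \|f\|_2$ and $\|f\|_2 \leq \sqrt{n+1}\,|f|_\C$. Chaining them gives $M(f) \leq \sqrt{n+1}\,|f|_\C$, which rearranges at once to the claimed bound $|f|_\C \geq M(f)(n+1)^{-1/2}$.

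For the first inequality I would start from the integral formula for the Mahler measure recorded above, read as $\log M(f) = \int_0^1 \log|f(e^{2\pi i t})|\, dt$, and rewrite the integrand as $\tfrac{1}{2}\log|f(e^{2\pi i t})|^2$. Since $\log$ is concave and $dt$ is a probability measure on $[0,1]$, Jensen's inequality gives $\int_0^1 \tfrac{1}{2}\log|f(e^{2\pi i t})|^2\, dt \leq \tfrac{1}{2}\log \int_0^1 |f(e^{2\pi i t})|^2\, dt = \log\|f\|_2$, and exponentiating yields $M(f) \leq \|f\|_2$.

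For the second inequality I would invoke Parseval's identity: because the characters $t \mapsto e^{2\pi i k t}$ are orthonormal on $[0,1]$, expanding $f(e^{2\pi i t}) = \sum_{k=0}^n a_k e^{2\pi i k t}$ gives $\|f\|_2^2 = \sum_{k=0}^n |a_k|_\C^2$. Bounding each term by the maximal coefficient then gives $\|f\|_2^2 \leq (n+1)\max_{0 \leq k \leq n} |a_k|_\C^2 = (n+1)\,|f|_\C^2$, so that $\|f\|_2 \leq \sqrt{n+1}\,|f|_\C$, as required.

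I do not anticipate a genuine obstacle; the only step needing real care is the application of Jensen's inequality, where one must use the concavity of $\log$ together with the fact that $dt$ has total mass one on $[0,1]$, and interpret the stated integral formula with $|f|$ rather than $f$ so that the logarithm is real-valued. The appearance of the factor $n+1$ is exactly what one expects, since it counts the $n+1$ coefficients entering Parseval's identity. Everything else reduces to that identity and a termwise estimate.
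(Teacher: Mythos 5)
Your proof is correct and coincides with the standard argument behind the cited result (Bombieri--Gubler, Lemma 1.6.7), which the paper quotes without proof: the chain $M(f) \leq \|f\|_2 \leq \sqrt{n+1}\,|f|_\C$ obtained from Jensen's inequality applied to the integral representation of $\log M(f)$ and from Parseval's identity for the coefficients. You are also right to flag that the paper's displayed formula should be read with $\log|f(e^{2\pi i t})|$ rather than $\log(f(e^{2\pi i t}))$; with that interpretation your argument goes through verbatim.
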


    If $K$ is a number field and $f = a_nx^n + \ldots + a_1x + a_0 \in K[x]$, we can apply the above proposition to the polynomial $f_\sigma = \sigma(a_n) x^n + \ldots + \sigma(a_1)x + \sigma(a_0) \in \C[x]$ for each embedding $\sigma \colon K \to \C$, take the product of all inequalities and the Gauss norms $|f|_\p$ for all primes $\p$ of $K$ to obtain the following.

    \begin{proposition}
        \phantomsection\label{alphabound1}
        Let $f = a(x-\alpha_1)\cdots(x-\alpha_n) \in K[x]$ be a polynomial that splits completely over $K$. We then have
        \[
            H(f)^d \geq (n+1)^{-d/2}\prod_{i=1}^n \prod_\p \max(1,|\alpha_i|_\p) \prod_\sigma \max(1, |\alpha_i|_\sigma).
        \]
    \end{proposition}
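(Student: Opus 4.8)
The plan is to estimate each factor in the product defining $H(f)^d$ separately and then multiply everything together, using the product formula to dispose of the leading coefficient. Writing $f = a(x-\alpha_1)\cdots(x-\alpha_n)$, I would start from the definition $H(f)^d = \prod_\p |f|_\p \prod_\sigma |f|_\sigma$ and treat the non-archimedean and archimedean factors on their own.

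For a prime $\p$, Gauss's lemma together with multiplicativity gives $|f|_\p = |a|_\p \prod_{i=1}^n \max(1, |\alpha_i|_\p)$; here each product over $\p$ is really finite, since $|\alpha_i|_\p = 1$ for all but finitely many $\p$. For an embedding $\sigma$, I would apply Proposition \ref{complexmahler} to the polynomial $f_\sigma = \sigma(a)(x - \sigma(\alpha_1))\cdots(x - \sigma(\alpha_n)) \in \C[x]$. Since $|f_\sigma|_\C = |f|_\sigma$ and $M(f_\sigma) = |a|_\sigma \prod_{i=1}^n \max(1, |\alpha_i|_\sigma)$, this yields
\[
|f|_\sigma \geq (n+1)^{-1/2}\, |a|_\sigma \prod_{i=1}^n \max(1, |\alpha_i|_\sigma).
\]

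Multiplying the prime identities and the $d$ archimedean inequalities together then gives
\[
H(f)^d \geq (n+1)^{-d/2} \left(\prod_\p |a|_\p \prod_\sigma |a|_\sigma\right) \prod_{i=1}^n \prod_\p \max(1,|\alpha_i|_\p) \prod_\sigma \max(1, |\alpha_i|_\sigma),
\]
where the factor $(n+1)^{-d/2}$ collects one copy of $(n+1)^{-1/2}$ from each of the $d = [K:\Q]$ embeddings $\sigma$. The one genuinely content-bearing step is the last one: since $f$ has degree $n$, its leading coefficient $a$ lies in $K^\times$, so the product formula gives $\prod_\p |a|_\p \prod_\sigma |a|_\sigma = 1$ and the leading-coefficient factor disappears, leaving exactly the claimed inequality.

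I do not expect a serious obstacle here, as the argument is an assembly of the tools just introduced. The points that require care are bookkeeping ones: matching the Gauss-norm notation $|f|_\sigma$ with the complex Gauss norm $|f_\sigma|_\C$ of the embedded polynomial, so that Proposition \ref{complexmahler} applies verbatim, and correctly counting $d$ embeddings --- including each complex place twice, per the convention fixed above --- so that the exponent on $(n+1)^{-1/2}$ comes out to exactly $d/2$ rather than, say, $(r_1+r_2)/2$.
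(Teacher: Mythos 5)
Your proof is correct and follows essentially the same route as the paper, which derives the proposition by applying Proposition \ref{complexmahler} to $f_\sigma$ for each of the $d$ embeddings, multiplying with the Gauss norms $|f|_\p$ via Gauss's lemma, and letting the product formula absorb the leading coefficient. Your write-up is in fact slightly more careful than the paper's one-paragraph derivation, since you make explicit the factor $|a|_v$ at every place and the counting of conjugate embeddings.
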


    The above proposition motivates us to study the quantity
    \[
        \prod_\p \max(1, |\alpha|_\p) \prod_\sigma \max(1, |\alpha|_\sigma)
    \]
    for elements $\alpha \in K$. If we let $f^\alpha_\Z$ be the minimal polynomial of $\alpha$ over $\Z$, this is equal to the Mahler measure of $f^\alpha_\Z(x)^{[K:\Q(\alpha)]}$ (see \cite[Proposition 1.6.6]{bombieri}), sometimes called the characteristic polynomial of $\alpha$ for the extension $K/\Q$, and we will therefore denote it by $M_K(\alpha)$. It turns out that the set
    \[
    \{M_K(\alpha) \mid \alpha \in K, M_K(\alpha) \leq \beta\}
    \]
    is finite for all $\beta \in \R$ (as bounds on the archimedean absolute values of $\alpha$ give bounds on the coefficients of the characteristic polynomial of $\alpha$ in $K/\Q$, which are integers) and as a consequence $M_K(\alpha) = 1$ if and only if $\alpha$ is zero or a root of unity (as we have then have $M_K(\alpha^m)=1$ for all positive integers $m$).
    
    Denote by $M_K$ the minimum of the set $\{M_K(\alpha)\mid \alpha \in K, M_K(\alpha)>1\}$. The following is now immediate from Proposition \ref{alphabound1}

    \begin{proposition}
        \phantomsection\label{bound1}
        Let $f\in K[x]$ be a polynomial of degree $n$ of which all roots are in $K^\times$ and of which $r$ are roots of unity. Then, we have
        \[
            \log H(f) \geq \frac{n-r}{d}\log M_K - \frac{1}{2}\log(n+1)
        \]
    \end{proposition}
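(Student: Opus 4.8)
The plan is to apply Proposition \ref{alphabound1} directly and then estimate the resulting product from below using the definition of $M_K$. Writing $f = a(x-\alpha_1)\cdots(x-\alpha_n)$ with every $\alpha_i \in K^\times$, Proposition \ref{alphabound1} gives
\[
H(f)^d \geq (n+1)^{-d/2}\prod_{i=1}^n \left( \prod_\p \max(1,|\alpha_i|_\p) \prod_\sigma \max(1,|\alpha_i|_\sigma) \right) = (n+1)^{-d/2}\prod_{i=1}^n M_K(\alpha_i),
\]
where the final equality is simply the definition of $M_K(\alpha_i)$ recalled just above the statement.

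The next step is to bound $\prod_{i=1}^n M_K(\alpha_i)$ from below, and here I would split the roots into the $r$ that are roots of unity and the remaining $n-r$ that are not. For a root of unity $\alpha_i$ we have $M_K(\alpha_i) = 1$, so these factors contribute nothing to the product. For each of the other $n-r$ roots, $\alpha_i$ is a nonzero element of $K$ that is not a root of unity, so by the characterization recalled in the excerpt we have $M_K(\alpha_i) > 1$; since $M_K$ is by definition the minimum of the set $\{M_K(\alpha) \mid \alpha \in K, M_K(\alpha) > 1\}$, this forces $M_K(\alpha_i) \geq M_K$. Multiplying these lower bounds gives
\[
\prod_{i=1}^n M_K(\alpha_i) \geq M_K^{\,n-r}.
\]

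Combining the two displays yields $H(f)^d \geq (n+1)^{-d/2} M_K^{\,n-r}$, and taking logarithms and dividing by $d$ produces exactly
\[
\log H(f) \geq \frac{n-r}{d}\log M_K - \frac{1}{2}\log(n+1),
\]
as claimed. I do not expect any genuine obstacle: the deduction is immediate once Proposition \ref{alphabound1} is in hand. The only point deserving a moment's attention is that every non-root-of-unity root really does have Mahler measure at least $M_K$, which relies on the finiteness argument preceding the definition of $M_K$ to guarantee that the minimum is attained and strictly exceeds $1$.
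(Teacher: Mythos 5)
Your proof is correct and matches the paper's intended argument: the paper declares Proposition \ref{bound1} ``immediate'' from Proposition \ref{alphabound1}, and your derivation---rewriting the product as $\prod_{i=1}^n M_K(\alpha_i)$, letting the $r$ roots of unity contribute factors of $1$, and bounding each remaining factor below by $M_K$ via the finiteness argument preceding its definition---is exactly the step the paper leaves to the reader. No gaps; taking logarithms and dividing by $d$ completes the claim as you state.
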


    There has been quite a bit of research on the values of $M_K(\alpha)$ and thus of $M_K$ (although typically in the context where we always let $K = \Q(\alpha)$ and simply vary $\alpha$ over $\overline{\Q}$, i.e. considering the Mahler measure of the minimal polynomial of an algebraic number; see Smyth \cite{smyth2} for a survey). There is evidence for the following conjecture, which was first posed as a question by Lehmer \cite{lehmer}.

    \begin{conjecture}[Lehmer's problem]
        \phantomsection\label{lehmer}
        For any $\alpha \in \overline{\Q}$ we have
        \[
            M_{\Q(\alpha)}(\alpha) \geq M(x^{10}+x^9-x^7-x^6-x^5-x^4-x^3+x+1) \approx 1.176.
        \]
    \end{conjecture}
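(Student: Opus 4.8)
The statement is Lehmer's problem, which remains open; what I can offer is the standard line of attack together with the obstruction that prevents it from closing. The plan is to bound $M_{\Q(\alpha)}(\alpha)$ below by an absolute constant over all $\alpha$ that are neither zero nor roots of unity, these being the only cases where $M_{\Q(\alpha)}(\alpha)=1$ by Kronecker's theorem (which underlies the finiteness and positivity discussion preceding the conjecture). Two reductions come first. If $\alpha$ is not an algebraic integer, the leading coefficient of its primitive minimal polynomial $f^\alpha_\Z$ is at least $2$, so $M_{\Q(\alpha)}(\alpha) = M(f^\alpha_\Z) \geq 2$, well above the conjectured bound; hence we may assume $\alpha$ is an algebraic integer. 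Next, by Smyth's theorem, if $f^\alpha_\Z$ is not reciprocal then $M_{\Q(\alpha)}(\alpha) \geq \theta_0 \approx 1.3247$, the smallest Pisot number (the real root of $x^3 - x - 1$), again exceeding $1.176$. So the problem reduces to reciprocal algebraic integers.

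For the reciprocal case the natural tool is the auxiliary-polynomial / resultant method. If $M(f^\alpha_\Z)$ were close to $1$, all conjugates of $\alpha$ would lie very near the unit circle $|z|=1$. One then tries to build an auxiliary $g \in \Z[x]$ of controlled degree and height whose resultant $\mathrm{Res}(f^\alpha_\Z, g)$ is a nonzero integer but is simultaneously forced, by the proximity of the roots to the unit circle, to satisfy $|\mathrm{Res}(f^\alpha_\Z,g)| < 1$, a contradiction. Dobrowolski's theorem carries this out with $g$ built from $f^\alpha_\Z(x^p)$ for primes $p$, using the congruence $f(x)^p \equiv f(x^p) \pmod p$ to bound the resultant from below by a power of $p$; balancing this against an upper bound in terms of the root distances yields $M_{\Q(\alpha)}(\alpha) \geq 1 + c\left(\frac{\log\log d}{\log d}\right)^3$ for $\alpha$ of degree $d$.

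The main obstacle --- and the reason this is a conjecture rather than a theorem --- is that every known auxiliary-polynomial bound, Dobrowolski's included, degrades to $1$ as the degree $d \to \infty$, whereas the conjecture demands a bound independent of $d$. The recalcitrant case is that of reciprocal algebraic integers whose conjugates cluster densely and symmetrically around $|z|=1$, the extreme instances being Salem numbers; here a single auxiliary polynomial does not force the resultant small enough. Closing the gap would seem to require a genuinely new rigidity or potential-theoretic constraint on integer polynomials with roots hugging the unit circle, which I do not expect to supply. Accordingly, this paper uses the conjecture only as motivation and relies throughout on the unconditional finiteness of $\{M_K(\alpha) \mid M_K(\alpha) \leq \beta\}$ and the positivity $M_K > 1$ established above.
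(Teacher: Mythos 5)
This statement is Lehmer's problem, a famous open conjecture: the paper states it without proof and uses it purely as motivation, so there is no paper proof to compare against, and you correctly decline to manufacture one. Your supporting discussion is accurate and lines up with what the paper actually relies on unconditionally --- Kronecker's characterisation of when $M_K(\alpha)=1$, the reduction to algebraic integers via the leading coefficient, Smyth's bound $M(x^3-x-1)\approx 1.325$ in the non-reciprocal case, and the Dobrowolski-type bound $1+\tfrac{1}{4}\left(\frac{\log\log d}{\log d}\right)^3$ cited from Voutier as Proposition \ref{voutier} --- and your diagnosis that every known auxiliary-polynomial bound degenerates as $\deg\alpha\to\infty$ is exactly why the conjecture remains open.
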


    Let us state two of the most prominent partial results in the direction of this conjecture explicitly, since they can be used to make our bounds more explicit.

    \begin{proposition}[Smyth \cite{smyth}]
        For any $\alpha \in \overline{\Q}$ whose minimal polynomial is not reciprocal, we have
        \[
            M_{\Q(\alpha)}(\alpha) \geq M(x^3-x-1) \approx 1.325.
        \]
    \end{proposition}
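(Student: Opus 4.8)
The plan is to reconstruct Smyth's argument via an auxiliary self-inversive rational function, so I begin with reductions. Write $\alpha$'s minimal polynomial over $\Z$ as $P$; if $\alpha$ is not an algebraic integer then $P$ has leading coefficient at least $2$, whence $M_{\Q(\alpha)}(\alpha) = M(P) \geq 2 > M(x^3-x-1)$ and we are done. So I may assume $P$ is monic of degree $d$ with $P(0) \neq 0$ (since $\alpha \neq 0$), irreducible, and non-reciprocal, meaning $P^*(x) := x^d P(1/x) \neq \pm P(x)$. Writing the roots as $\alpha_1,\dots,\alpha_d$, I have $M(P) = \prod_{|\alpha_j| > 1} |\alpha_j|$, and I let $\theta_0 \approx 1.3247$ be the real root of $x^3 - x - 1$, noting for the sharpness check that $x^3-x-1$ is itself monic, irreducible, non-reciprocal, with exactly one root outside the unit circle, so $M(x^3-x-1) = \theta_0$.

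The central object is $f(z) = P^*(z)/P(z)$. Because $P$ has real coefficients, on $|z| = 1$ one has $P(1/z) = P(\overline{z}) = \overline{P(z)}$, so $|P^*(z)| = |P(z)|$ and hence $|f(z)| = 1$ on the unit circle: $f$ is self-inversive. Here the hypothesis enters decisively: since $P$ is irreducible and non-reciprocal, $P$ and $P^*$ share no root (a common root would force $P \mid P^*$, hence $P^* = \pm P$ by degree, a contradiction), so $f$ is a genuinely nonconstant rational function of modulus one on the circle, and $f(0) = 1/P(0)$ with $|P(0)| = \prod_j|\alpha_j| \geq 1$ an integer. I would then use Jensen's formula to write $\log M(P) = \int_0^1 \log|P(e^{2\pi i t})|\,dt$ and feed the power-series data of $f$ (equivalently, of $\log f$, whose coefficients are the power sums $\sum_j \alpha_j^n$ of the roots) into an integral inequality over the circle. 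The non-reciprocality guarantees that the relevant coefficients are not all forced to cancel, so that a Parseval/$L^2$ estimate on $f$ against a suitable test function yields a strictly positive lower bound rather than the trivial one.

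The decisive and hardest step is the construction of the right auxiliary function. The strategy is to introduce an explicit analytic function $\Phi$ on the disk, bounded by $1$ on the circle, and pair it against $f$ so that integrating over $|z|=1$ produces an inequality of the form $\log M(P) \geq \log \theta_0$, with the integrality of the controlled Taylor coefficients supplying the quantization that prevents the bound from degrading. The entire difficulty is that this inequality must be sharp exactly at $x^3 - x - 1$: a naive choice of $\Phi$ gives only a weaker constant, and it is the precise extremal configuration — the one reproducing the smallest Pisot number — that forces $\theta_0$ to appear. I would therefore expect to spend most of the effort reverse-engineering $\Phi$ from the equality case, and then confirm sharpness by evaluating the construction on $P = x^3 - x - 1$ to check that equality indeed holds there.
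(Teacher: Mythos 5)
The first thing to note is that the paper contains no proof of this proposition: it is quoted from Smyth \cite{smyth} as a known external input, so your attempt has to be judged as a reconstruction of Smyth's 1971 theorem rather than against any internal argument. Your opening reductions are correct and are indeed the standard ones: the non-integer case via the leading coefficient, the fact that irreducibility plus non-reciprocality makes $P$ and $P^*(x) = x^d P(1/x)$ coprime, and the self-inversiveness $|f| = 1$ on the unit circle for $f = P^*/P$. One local slip: the Taylor coefficients of $\log f$ are not the power sums $p_n = \sum_j \alpha_j^n$ but, up to sign and an additive constant, the differences $p_{-n} - p_n$, and these --- like the coefficients of $f$ itself, starting with $f(0) = 1/P(0)$ --- are integers only after you reduce to $|P(0)| = 1$. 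You have the tool for that reduction in hand (if $|P(0)| \geq 2$ then $M(P) \geq \prod_j |\alpha_j| = |P(0)| \geq 2$, exactly parallel to your leading-coefficient step), but as written the integrality ``quantization'' you invoke is not yet available.

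The genuine gap is the one you flag yourself: the deferred construction \emph{is} the theorem. Everything up to ``a Parseval/$L^2$ estimate against a suitable test function'' is soft, and a generic such estimate provably cannot deliver the constant: straightforward Schur--Carath\'eodory-type coefficient bounds for functions of modulus at most one (applied to $f$ after clearing its poles inside the disk with Blaschke factors --- this is where $M(P)$ enters, since those poles are precisely the small conjugates) yield only a weaker bound, on the order of $1.27$, not $\theta_0 \approx 1.3247$. Smyth's actual argument does not pair $f$ against a single auxiliary $\Phi$ reverse-engineered from the equality case; it runs a case analysis on the first two nonvanishing integer Taylor coefficients of $f$ (each of modulus at least one, which is where non-reciprocality and integrality bite), with $\theta_0$ emerging as the worst case of that analysis. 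So the proposal is not merely unfinished at its decisive step --- the announced shape of that step (one test function, sharpness confirmed afterwards on $x^3 - x - 1$) does not match how the bound is actually forced, and verifying equality at $x^3 - x - 1$ would in any case only confirm sharpness of the statement, not validity of the inequality.
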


    Notice that if $\alpha$ has odd degree over $\Q$ (so for $\alpha$ an element of a number field of odd degree in particular), its minimal polynomial can never be reciprocal, so the above bound holds in that case. More generally, we have the following bound.

    \begin{proposition}[Voutier \cite{voutier}]
        \phantomsection\label{voutier}
        For any $\alpha \in \overline{\Q}$ with $\deg \alpha \geq 2$, we have
        \[
            M_{\Q(\alpha)}(\alpha) \geq 1+\frac{1}{4}\left(\frac{\log\log \deg \alpha}{\log \deg \alpha}\right)^3.
        \]
    \end{proposition}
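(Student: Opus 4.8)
The plan is to prove a Dobrowolski-type lower bound, of which Voutier's inequality is the explicit form; I will only indicate how one reaches the shape $1 + c\,(\log\log d/\log d)^3$, since fixing the constant $c = 1/4$ uniformly in $d \ge 2$ is a matter of bookkeeping. First I would reduce to the essential case. For $d = 2$ the inequality is vacuous, as $\log\log 2 < 0$ forces the right-hand side below $1 \le M_{\Q(\alpha)}(\alpha)$; and I read the statement for $\alpha$ not a root of unity, which is the only case needing an argument, roots of unity being the genuine extremal obstruction with $M_{\Q(\alpha)}(\alpha) = 1$. Since a non-integral $\alpha$ has leading coefficient at least $2$ in $f^\alpha_\Z$, hence $M_{\Q(\alpha)}(\alpha) \ge 2$, I may assume $\alpha$ is an algebraic integer, so that $f := f^\alpha_\Z$ is monic of degree $d$ with conjugates $\alpha_1, \dots, \alpha_d$ and $M := M_{\Q(\alpha)}(\alpha) = \prod_i \max(1, |\alpha_i|_\C)$.

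The arithmetic engine is a Frobenius congruence. For a prime $p$, set $\rho_p = \prod_{i,j}(\alpha_i - \alpha_j^p)$, a rational integer. From $f(x)^p \equiv f(x^p) \pmod{p}$ one gets $f(\alpha_j^p) \equiv f(\alpha_j)^p = 0$ modulo $p$ in the ring of integers of the splitting field, whence $p^d \mid \rho_p$. Moreover $\rho_p \ne 0$: were $\alpha^p$ a conjugate of $\alpha$, it would have the same degree and $M = M_{\Q(\alpha)}(\alpha^p) = M^p$, forcing $M = 1$, which we excluded. Hence $|\rho_p| \ge p^d$, while $|\alpha_i - \alpha_j^p| \le 2\max(1,|\alpha_i|)\max(1,|\alpha_j|)^p$ gives the size bound $|\rho_p| \le 2^{d^2} M^{d(p+1)}$.

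A single prime yields only $p \le 2^d M^{p+1}$, far too weak, since the factor $2^{d^2}$ swamps the gain; multiplying over all primes $p \le N$ and optimizing recovers no more than $M > 1$ with an exponentially small gap in $d$. The crux --- and the main obstacle --- is to amplify the divisibility, making one small nonzero rational integer divisible by a very high power of $\prod_{p \le N} p$. I would do this through a generalized Vandermonde (interpolation) determinant $\Delta = \det(\alpha_{i_r}^{e_s})$, with rows indexed by conjugates and columns by exponents $e_s$ built as products of primes $p \le N$. Then the norm of $\Delta$ to $\Q$ is a rational integer, nonzero by the same non-root-of-unity input; Hadamard's inequality together with $\sum_i \log^+|\alpha_i| = \log M$ bounds it above by size and factorial factors times a modest power of $M$, while the $\mathbb{F}_p$-linearity of the Frobenius $y \mapsto y^p$ on the residue fields forces columns whose exponents differ by factors of $p$ to become dependent modulo primes $\p$ above $p$, so that each $p \le N$ divides $\Delta$ to high order, giving a lower bound $\prod_{p \le N} p^{\,\gamma d}$.

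The final step is a three-parameter optimization --- the determinant size, the forced order of vanishing, and the prime cutoff $N$ --- balanced with the prime number theorem ($\theta(N) \sim N$, $\sum_{p \le N} p \sim N^2/(2\log N)$). The optimum lies near $N \asymp (\log d/\log\log d)^2$, and it is exactly the interaction of these scales that manufactures the exponent $3$. I expect the delicate part to be the construction and estimation of $\Delta$: bounding the exact power of each $p$ dividing it from below while keeping its archimedean size under control. Once that is in place, extracting the explicit constant $1/4$ for every $d \ge 2$, rather than merely asymptotically, demands only careful tracking of the Stirling- and Chebyshev-type constants, not a new idea.
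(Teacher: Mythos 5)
The paper does not prove this proposition at all: it is quoted verbatim from Voutier \cite{voutier} as a known black box, so there is no internal proof to compare yours against. Measured against the actual literature, your outline reconstructs the right strategy: the reductions are all correct (the $d=2$ case is indeed vacuous since $\log\log 2<0$; non-integral $\alpha$ forces leading coefficient $\geq 2$ and hence $M_{\Q(\alpha)}(\alpha)\geq 2$), the Frobenius congruence $p^d \mid \rho_p$ with $\rho_p \neq 0$ for $\alpha$ not a root of unity is the correct arithmetic engine, you correctly diagnose why a single prime gives nothing (the $2^{d^2}$ swamps the gain), and the amplification via a generalized Vandermonde determinant whose norm is divisible by high powers of all primes $p\leq N$ is precisely the Cantor--Straus method that Voutier's paper executes (following Louboutin), with the exponent $3$ emerging from the parameter balance as you describe. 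You also caught a genuine slip in the paper's statement: as printed it is false for roots of unity (e.g.\ $\alpha=\zeta_5$ has degree $4$ and $M_{\Q(\alpha)}(\alpha)=1$, below the right-hand side), and Voutier's theorem indeed carries the hypothesis that $\alpha$ is not a root of unity.

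That said, as a proof your proposal has a real gap, and it sits exactly where you flag it: the construction and estimation of $\Delta$ are asserted, not carried out. The lower bound on the exact power of each $p$ dividing the norm of $\Delta$ (via $\mathbb{F}_p$-linear dependence of columns modulo primes above $p$), balanced against the Hadamard upper bound, is the entire content of the theorem; nothing in your sketch verifies that the two sides can actually be separated, i.e.\ that the factorial and dimension factors do not swamp the divisibility gain the way $2^{d^2}$ did in the single-prime attempt. Moreover, the closing claim that extracting the uniform constant $1/4$ for all $d\geq 2$ "demands only careful tracking" understates the situation: Dobrowolski's method with straightforward bookkeeping yields far weaker uniform constants (on the order of $1/1200$), and pushing the determinant method to an explicit $1/4$ valid down to $d=2$ is the principal contribution of Voutier's paper, requiring refined choices of the exponent sets and sharper archimedean estimates, not merely Stirling- and Chebyshev-type constants. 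So: right road map, correct reductions, but the theorem itself --- both the amplification inequality and the explicit uniform constant --- remains unproved in your write-up.
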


\section{Main results}\label{mainresults}

As in the previous section, let $K$ be a number field of degree $d$ with $w_K$ roots of unity. The previous section has provided us with lower bounds on the height of a polynomial of which all roots are in $K^\times$ and of which not too many roots are roots of unity (Proposition \ref{bound1}), based on some general theoretical machinery. The aim of this section is to give bounds on the height of polynomials of which many of the roots are roots of unity. Together, the bounds will give us a general upper bound for $C_K$. Moreover, we will show that for some $K$ the computations in this section suffice to find $C_K$ exactly, while for other $K$ they fundamentally cannot do so. We will also sketch a proof of Theorem \ref{t2}.

Let us start by giving a new bound on the height of a polynomial that splits over $K$ in terms of its roots.

\begin{lemma}
    \phantomsection\label{alphabound2}
    Let $f = a(x-\alpha_1)\cdots(x-\alpha_n)\in K[x]$ be a polynomial that splits completely over $K$. We then have
    \[
    H(f)^{dw_K} \geq (n+1)^{-dw_K} \prod_{i=1}^n \prod_\p \max(1,|\alpha_i|^{w_K}_\p)\prod_\sigma |1+\alpha_i^{w_K}|_\sigma.
    \]
\end{lemma}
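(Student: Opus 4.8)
My plan is to split the quantity $H(f)^{dw_K} = \prod_\p |f|_\p^{w_K}\prod_\sigma |f|_\sigma^{w_K}$ into its non-archimedean and archimedean parts and treat them separately, mirroring the proof of Proposition \ref{alphabound1} but with every factor raised to the power $w_K$. For each prime $\p$, Gauss's lemma gives $|f|_\p = |a|_\p\prod_i\max(1,|\alpha_i|_\p)$, so that $|f|_\p^{w_K} = |a|_\p^{w_K}\prod_i\max(1,|\alpha_i|_\p^{w_K})$, which already produces the discrete part of the claimed bound, up to a leading-coefficient factor $|a|_\p^{w_K}$ to be cancelled at the end. All the genuine content therefore lies in a single complex inequality, to be applied to $f_\sigma$ for each embedding $\sigma$.

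That inequality is the following: for a complex polynomial $g = a(x-\alpha_1)\cdots(x-\alpha_n)$ of degree $n$, I claim $|g|_\C^{w_K}\geq |a|^{w_K}(n+1)^{-w_K}\prod_{i=1}^n|1+\alpha_i^{w_K}|_\C$. To prove it, I would evaluate $g$ at the $w_K$ roots $\eta$ of $x^{w_K}+1$. Since $x^{w_K}+1 = \prod_{\eta^{w_K}=-1}(x-\eta)$, reversing the order of the resulting double product gives $\prod_{\eta^{w_K}=-1} g(\eta) = \pm\, a^{w_K}\prod_{i=1}^n(1+\alpha_i^{w_K})$, and hence $\prod_{\eta^{w_K}=-1}|g(\eta)|_\C = |a|^{w_K}\prod_i|1+\alpha_i^{w_K}|_\C$. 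Each root satisfies $|\eta|_\C = 1$, so the triangle inequality yields the crude but sufficient bound $|g(\eta)|_\C\leq(n+1)|g|_\C$; multiplying these $w_K$ inequalities together gives $|a|^{w_K}\prod_i|1+\alpha_i^{w_K}|_\C\leq(n+1)^{w_K}|g|_\C^{w_K}$, which is the claim. Here the elementary estimate $|g(\eta)|_\C\leq(n+1)|g|_\C$ takes the place of Proposition \ref{complexmahler}; the roots of $x^{w_K}+1$ (rather than of $x^{w_K}-1$) are chosen precisely so that a root of unity $\alpha$, which satisfies $\alpha^{w_K}=1$, contributes the nontrivial factor $|1+1|_\C = 2$ rather than $1$.

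To finish, I would apply the complex inequality with $g = f_\sigma$ for each $\sigma$, using that $|1+\sigma(\alpha_i)^{w_K}|_\C = |1+\alpha_i^{w_K}|_\sigma$, and take the product over the $d$ embeddings to obtain $\prod_\sigma|f|_\sigma^{w_K}\geq(n+1)^{-dw_K}\prod_\sigma|\sigma(a)|^{w_K}\prod_i\prod_\sigma|1+\alpha_i^{w_K}|_\sigma$. Multiplying this against the non-archimedean product and collecting the leading-coefficient contributions, the factor $\prod_\p|a|_\p^{w_K}\prod_\sigma|\sigma(a)|^{w_K} = \left(\prod_\p|a|_\p\prod_\sigma|a|_\sigma\right)^{w_K}$ equals $1$ by the product formula, leaving exactly the asserted inequality. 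The one genuinely new idea is the choice of evaluation points in the second paragraph; everything else parallels Proposition \ref{alphabound1}, and I expect no real obstacle beyond carefully tracking the leading coefficient so that the product formula applies cleanly.
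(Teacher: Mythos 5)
Your proof is correct, and while it rests on the same key idea as the paper's --- evaluating the polynomial at the $w_K$ roots of $x^{w_K}+1$, so that the product of the evaluations factors as $|a|^{w_K}\prod_i |1+\alpha_i^{w_K}|$ while each individual evaluation is crudely bounded by $(n+1)$ times the Gauss norm --- your implementation at the archimedean places is genuinely different. The paper treats the evaluation points $\zeta_{2w_K}^{2j-1}$ as algebraic numbers: it passes to the quadratic extension $K(\zeta_{2w_K})$, runs the triangle-inequality estimate over all $2d$ embeddings of that larger field (picking up $(n+1)^{2dw_K}$), and then takes a square root, which requires observing that each embedding of $K$ extends in exactly two ways and that the quantities $\alpha_i^{w_K}+1$ and the coefficients of $f$ all lie in $K$. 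You instead isolate a single inequality for complex polynomials, $|g|_\C^{w_K}\geq |a|_\C^{w_K}(n+1)^{-w_K}\prod_i|1+\alpha_i^{w_K}|_\C$, and apply it to $f_\sigma$ embedding by embedding; this eliminates the cyclotomic extension, the embedding-doubling count, and the square-root step entirely. A second, more minor difference: the paper reduces to monic $f$ at the outset via the product formula, whereas you carry the leading coefficient through both the Gauss-lemma factorisation of $|f|_\p$ and the archimedean bound, cancelling $\bigl(\prod_\p|a|_\p\prod_\sigma|a|_\sigma\bigr)^{w_K}=1$ at the end; both are valid uses of the product formula. Your route is somewhat more elementary and self-contained, and it makes transparent why the lemma generalises beyond a fixed $K$ (as needed in the sketch of Theorem \ref{t2}); the paper's version has the mild advantage of staying phrased entirely as Gauss-norm estimates over number fields, in the same mould as Proposition \ref{alphabound1}.
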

\begin{proof}
    Using the product formula, we can assume without loss of generality that $f$ is monic, so $f = \prod_{i=1}^n(x-\alpha_i)$. Let $\zeta_{2w_K}$ be a primitive $2w_K$-th root of unity and consider the quadratic extension $K(\zeta_{2w_K})$ of $K$. Notice that
    \[
        \prod_{i=1}^n \prod_\sigma |\alpha_i^{w_K} + 1|_\sigma = \prod_{i=1}^n \prod_\sigma \prod_{j=1}^{w_K} |\alpha_i - \zeta_{2w_K}^{2j-1}|_\sigma = \prod_\sigma \prod_{j=1}^{w_K} |f(\zeta_{2w_K}^{2j-1})|_\sigma
    \]
    where the products range over all embeddings $\sigma \colon K(\zeta_{2w_K})\to \C$. The triangle inequality immediately shows that the absolute value of a polynomial evaluated at a root of unity is at most $n+1$ times the corresponding Gauss norm, so the above becomes
    \[
    \prod_{i=1}^n \prod_\sigma |\alpha_i^{w_K} + 1|_\sigma \leq (n+1)^{2dw_K}\prod_\sigma |f|_\sigma^{w_K},
    \]
    where the products still range over all embeddings $\sigma \colon K(\zeta_{2w_K})\to \C$. Since every embedding $\sigma\colon K \to \C$ extends into precisely two distinct embeddings $\sigma \colon K(\zeta_{2w_K}) \to \C$ and the numbers $\alpha_i^{w_K}+1$ for $1 \leq i \leq n$, as well as the coefficients of $f$, are actually in $K$, taking the square root of the previous inequality yields
    \[
        \prod_{i=1}^n \prod_\sigma |\alpha_i^{w_K} + 1|_\sigma \leq (n+1)^{dw_K}\prod_\sigma |f|_\sigma^{w_K},
    \]
    where the products range over all embeddings $\sigma\colon K \to \C$ again. Multiplying this inequality by $\prod_{\p}|f|_\p^{w_K}$ (and using Gauss's lemma) finishes the proof.
\end{proof}

The lemma above is strikingly similar to Proposition \ref{alphabound1}. One key difference is that this bound shows that roots of unity as roots of $f$ do contribute to the height. We obtain the following.

\begin{corollary}
    \phantomsection\label{bound2}
    Let $f\in K[x]$ be a polynomial of degree $n$ of which all roots are in $K^\times$ and of which $r$ of the roots are roots of unity. We then have
    \[
        \log H(f) \geq \frac{r}{w_K} \log 2 - \log(n+1).
    \]
\end{corollary}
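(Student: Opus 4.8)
The plan is to take logarithms in Lemma \ref{alphabound2} and bound the contribution of each root $\alpha_i$ to the product on the right-hand side from below. Writing the inequality additively, it suffices to show that each factor
\[
    F_i = \prod_\p \max(1,|\alpha_i|^{w_K}_\p)\prod_\sigma |1+\alpha_i^{w_K}|_\sigma
\]
satisfies $F_i \geq 1$ in general, and $F_i = 2^d$ whenever $\alpha_i$ is a root of unity. Granting this, the product on the right-hand side of Lemma \ref{alphabound2} is at least $2^{dr}\cdot 1^{\,n-r} = 2^{dr}$, so that $H(f)^{dw_K} \geq (n+1)^{-dw_K}2^{dr}$; taking logarithms and dividing by $dw_K$ gives the claim.

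The main step is the lower bound $F_i \geq 1$, which I would prove for every root at once. Set $\beta = \alpha_i^{w_K} \in K^\times$. First I would check that $1+\beta \neq 0$: if $\beta = -1$ then $\alpha_i^{2w_K}=1$, making $\alpha_i$ a root of unity of $K$, whence $\alpha_i^{w_K}=1$ since the roots of unity of $K$ form a cyclic group of order $w_K$, contradicting $\beta=-1$. With $1+\beta\neq 0$, the product formula applied to $1+\beta$ gives $\prod_\sigma|1+\beta|_\sigma = \left(\prod_\p|1+\beta|_\p\right)^{-1}$, so that
\[
    F_i = \prod_\p \frac{\max(1,|\beta|_\p)}{|1+\beta|_\p}.
\]
The ultrametric inequality yields $|1+\beta|_\p \leq \max(1,|\beta|_\p)$ at every finite prime $\p$, so each factor in this product is at least $1$ and hence $F_i \geq 1$.

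It remains to evaluate $F_i$ when $\alpha_i$ is a root of unity. Using again that the roots of unity of $K$ form a cyclic group of order $w_K$, we have $\alpha_i^{w_K}=1$, so $1+\alpha_i^{w_K}=2$ and $|1+\alpha_i^{w_K}|_\sigma = 2$ for each of the $d$ embeddings $\sigma\colon K\to\C$; moreover $\alpha_i$ is a unit at every finite prime, so $\max(1,|\alpha_i|^{w_K}_\p)=1$, giving $F_i = 2^d$ exactly. I expect the only genuine subtleties to be the bookkeeping with the product formula and the exclusion of the degenerate case $\beta=-1$; once those are settled, the corollary follows by combining the three ingredients above.
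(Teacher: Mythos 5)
Your proof is correct and follows essentially the same route as the paper's: bound each factor $\prod_\p \max(1,|\alpha|^{w_K}_\p)\prod_\sigma|1+\alpha^{w_K}|_\sigma$ below by $1$ via the ultrametric inequality and the product formula, evaluate it as $2^d$ when $\alpha$ is a root of unity (using $\alpha^{w_K}=1$), and feed this into Lemma \ref{alphabound2}. Your explicit check that $\alpha^{w_K}\neq -1$ (needed before invoking the product formula) is a small point the paper leaves implicit, and is a welcome addition.
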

\begin{proof}
    Consider the quantity
    \[
        \prod_\p\max(1,|\alpha|^{w_K}_\p)\prod_\sigma |\alpha^{w_K}+1|_\sigma
    \]
    for an $\alpha \in K^\times$. By the non-archimedean triangle inequality for each $\p$, we find this is at least
    \[
        \prod_\p|\alpha^{w_K}+1|_\p\prod_\sigma |\alpha^{w_K}+1|_\sigma
    \]
    for any $\alpha \in K^\times$, which is $1$ by the product formula. On the other hand, if $\alpha$ is a root of unity in $K$, so a $w_K$-th root of unity, the expression simply evaluates to $2^d$. By Lemma \ref{alphabound2}, it follows that
    \[
        H(f)^{dw_K} \geq (n+1)^{dw_K}\cdot 2^{dnr},
    \]
    from which the proposition follows.
\end{proof}

We are also interested in how close the height of a polynomial can actually be to the lower bounds we derive, i.e. we want to find lower bounds for $C_K$. The following elementary lemma helps us do this by considering simple families of polynomials.

\begin{lemma}
    \phantomsection\label{lowerbound}
    For a polynomial $f = a_nx^n + \ldots + a_1x + a_0 \in \Z[x]$ with $\gcd(a_1, \ldots, a_n)=1$, we have
    \[
        H(f^m) \leq (|a_n| + \ldots + |a_0|)^m
    \]
    for all integers $m \geq 0$. In particular, if $f$ splits completely over $K$, we have
    \[
        C_K \geq \frac{n}{\log(|a_n| + \ldots + |a_0|)}
    \]
\end{lemma}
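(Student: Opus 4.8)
The plan is to estimate $H(f^m)$ by splitting the defining product into its non-archimedean and archimedean parts and bounding each separately. Since we may regard $f^m$ as a polynomial over $\Q$ (where $d = 1$ and there is a single archimedean place), the defining formula reads $H(f^m) = \prod_p |f^m|_p \cdot |f^m|_\infty$, where $|\cdot|_\infty$ denotes the archimedean Gauss norm. The non-archimedean factor is harmless: because $f^m$ has integer coefficients, every coefficient is $p$-integral, so $|f^m|_p \le 1$ for every prime $p$ and hence $\prod_p |f^m|_p \le 1$. The hypothesis $\gcd(a_1, \dots, a_n) = 1$ makes this contribution exactly $1$ for $f$ itself, so that $H(f) = \max_{1 \le i \le n}|a_i|$, matching the height used by Hajdu--Tijdeman--Varga; but for the power I only need the inequality. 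Thus the whole problem reduces to bounding the archimedean Gauss norm $|f^m|_\infty$.

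For the archimedean bound I would pass to the polynomial $\hat f(x) = \sum_{i=0}^n |a_i| x^i$ obtained by replacing each coefficient of $f$ with its absolute value. Writing $f^m = \sum_j c_j x^j$, each coefficient $c_j$ is a sum of products $a_{i_1}\cdots a_{i_m}$, so the triangle inequality gives $|c_j| \le \hat c_j$, where $\hat c_j$ is the corresponding (non-negative) coefficient of $\hat f^m$. Therefore
\[
|f^m|_\infty = \max_j |c_j| \le \sum_j \hat c_j = \hat f^m(1) = \hat f(1)^m = \left(\sum_{i=0}^n |a_i|\right)^m.
\]
Combining this with $\prod_p |f^m|_p \le 1$ yields $H(f^m) \le (|a_n| + \dots + |a_0|)^m$, the first claim. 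The case $m = 0$ is the trivial identity $H(1) = 1$.

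For the second statement of the lemma I would run a limiting argument against Definition \ref{defCK}. Assume $f$ splits completely over $K$ with all roots in $K^\times$ (so in particular $a_0 \ne 0$); then every power $f^m$ also splits completely over $K$ with roots in $K^\times$, has degree $nm$, and by the first part has height at most $B^m$, where $B = \sum_i |a_i| \ge |a_n| + |a_0| \ge 2$ (so that $\log B > 0$). Suppose, for contradiction, that some $C < n/\log B$ were an admissible constant in Definition \ref{defCK}. Applying the defining inequality to the family $\{f^m\}_{m \ge 1}$ and writing $N = nm$, I would obtain
\[
N - C \log H(f^m) \ge nm - Cm\log B = m\,(n - C\log B),
\]
and since $n - C\log B > 0$ this lower bound grows linearly in $N = nm$, contradicting the requirement that the left-hand side be $o(N)$. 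Hence no $C < n/\log B$ is admissible, and $C_K \ge n/\log B$.

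The only real subtlety I anticipate is the bookkeeping around the definition of $C_K$: one must check that the error term $o(n)$ genuinely cannot absorb the positive linear gap $m\,(n - C\log B)$ produced by the power family, which is exactly what forces sharpness in the limit $m \to \infty$. Everything else reduces to the triangle inequality together with the fact that integer polynomials have non-archimedean Gauss norm at most $1$.
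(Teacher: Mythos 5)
Your proof is correct and takes essentially the same route as the paper's: the non-archimedean Gauss norms are handled trivially (the paper uses the gcd hypothesis with Gauss's lemma to get exact equality $H(f^m)=|f^m|_\infty$, while you observe that $|f^m|_p\le 1$ from integrality already suffices for the upper bound), and the archimedean norm is bounded by the same triangle-inequality comparison with $\bigl(|a_n|+\ldots+|a_0|\bigr)^m$ evaluated via $\hat f(1)^m$. Your explicit limiting argument for $C_K\ge n/\log B$ against the $o(n)$ term in Definition \ref{defCK}, including the check that $\log B>0$, merely spells out what the paper leaves implicit in its ``in particular'' step.
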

\begin{proof}
    By the coprimality condition on the coefficients of $f$, $|f|_\p = 1$ for all $\p$, so by Gauss's lemma all discrete Gauss norms are equal to $1$. All archimedean Gauss norms are equal to $|f^m|$, so we have $H(f^m) = |f^m|$. Because the sum of the absolute values of the coefficients is larger than their maximum, we get
    \[
        H(f^m) = |f^m| \leq \sum_{\ell=0}^{mn}\left(\left|\sum_{\ell_1 +\ldots + \ell_m = \ell}\left(\prod_{j=1}^m a_{\ell_j}\right)\right| x^\ell\right) \leq \sum_{\ell=0}^{mn}\left(\sum_{\ell_1 +\ldots + \ell_m = \ell}\left(\prod_{j=1}^m |a_{\ell_j}|\right)x^\ell\right)
    \]
    by the triangle inequality. This is the sum of the coefficients of $(|a_n|x^n + \ldots +|a_1|x +|a_0|)^m$ so equal to that polynomial evaluated at $1$, from which the result follows.
\end{proof}

Let us move on to the proof of Theorem \ref{t1}, starting with the first part.

\begin{proof}[Proof of Theorem {\hyperref[t1(i)]{\ref*{t1}(\ref*{t1(i)})}}.]
    The bound $\frac{w_K}{\log 2} \leq C_K$ follows from Lemma \ref{lowerbound} with $f = x^{w_K}-1 = \prod_{i=1}^{w_K}(x-\zeta_{w_K}^i)$. The upper bound for $C_K$ follows from Proposition \ref{bound1} and Lemma \ref{alphabound2}: if we multiply the bound in the former by $\frac{w_K}{\log 2}$ and the one in the latter by $\frac{d}{\log M_K}$ and add the inequalities, we find that for any $f \in K[x]$ of degree $n$ with all roots in $K^\times$ we have
    \[
        \left(\frac{w_K}{\log 2}+\frac{d}{\log M_K}\right) \log H(f) \geq n - \left(\frac{w_K}{2\log 2} + \frac{d}{\log M_K}\right)\log(n+1),
    \]
    from which the result follows.
\end{proof}

\begin{remark}
    It is natural to wonder what the polynomials (of which all roots are in $K^\times$) of minimal height for their degree are. Our arguments are mostly able to bypass having to answer this question, but we do not really build any intuition for what these polynomials look like as a result. While the interaction with the underlying number fields is hard to understand completely, we do have a reasonable understanding of where the roots of a complex polynomial need to be for its coefficients to be small. Firstly, we have already seen (Proposition \ref{complexmahler}) that the roots should not have large absolute value in this case. Secondly, it is known that the arguments of the roots should be spread out along the interval $[0,2\pi)$, since many roots of which the arguments are bunched together force large coefficients (see e.g. the work of Erd\H{o}s and Turán \cite{erdos}). This provides heuristics for constructing polynomials of small height: one needs to ensure that the roots are small and spread out in all complex embeddings. Moreover, this gives a more intuitive reason for why we should have $C_K < \infty$: roots of a polynomial that are not roots of unity contribute to the height because of their size, while those that are roots of unity contribute because they are forced to bunch up if there are enough, since any $K$ only contains finitely many roots of unity. Such ideas actually provide an alternative way to give upper bounds on $C_K$. It turns out the bounds obtained like this are worse than those we present here. More details may be found in the author's Bachelor's thesis \cite{tromp}.
\end{remark}

For some number fields $K$, the bound in Lemma \ref{alphabound2} is enough to give an upper bound on $C_K$, without needing a result like Proposition \ref{alphabound1}. In fact, the exact value of $C_K$ can be derived from it in some cases. We proceed to the proof of the second part of Theorem \ref{t1}.

\begin{proof}[Proof of Theorem {\hyperref[t1(i)]{\ref*{t1}(\ref*{t1(ii)})}}.]
    We have to show that $C_K \leq \frac{w_K}{\log_2}$ for $K$ totally real or equal to $\Q(\sqrt{-1})$ or $\Q(\sqrt{-3})$. It is enough to show that the quantity
    \[
        \prod_{i=1}^n \prod_\p \max(1,|\alpha_i|^{w_K}_\p)\prod_\sigma |1+\alpha_i^{w_K}|_\sigma
    \]
    on the right hand side of the bound in Lemma \ref{alphabound2} is at least $2^{dn}$, which we will do by showing that
    \[
        \prod_\p \max(1,|\alpha|^{w_K}_\p)\prod_\sigma |1+\alpha^{w_K}|_\sigma \geq 2^d
    \]
    for all $\alpha \in K^\times$.

    \emph{Case 1: totally real $K$.} Notice that $w_K=2$. For each $\alpha \in K^\times$ and each embedding $\sigma \to \R$ we have 
    \[
    |1+\alpha^{2}|_\sigma = |1+\sigma(\alpha)^2|_\R \geq 2|\sigma(\alpha)|_\R = 2|\alpha|_\sigma
    \]
    by the inequality of the arithmetic and geometric means. Since $\max(1,|\alpha|^{2}_\p) \geq |\alpha|_\p$ for all $\p$, it follows that
    \[
        \prod_\p \max(1,|\alpha|^{2}_\p)\prod_\sigma |1+\alpha^{2}|_\sigma \geq \prod_\p |\alpha|_\p\prod_\sigma 2 |\alpha|_\sigma = 2^d
    \]
    by the product formula.

    \emph{Case 2: $K = \Q(\sqrt{-1})$.} Notice that $w_K=4$. Let $\alpha \in K^\times$. Since $\O_K$ is a UFD, we can write $\alpha = \beta/\gamma$ for $\beta,\gamma \in \O_K$ coprime and both nonzero. Then simply considering the prime factorisation, we have $\max(1,|\alpha|_\p) = |1/\gamma|_\p$ for any $\p$, so that
    \[
        \prod_\p \max(1,|\alpha|^{4}_\p)\prod_\sigma |1+\alpha^{4}|_\sigma = \prod_\p |1/\gamma|_\p^4 \prod_\sigma |1/\gamma|^4_\sigma|\beta^4+\gamma^4|_\sigma = \prod_\sigma |\beta^4+\gamma^4|_\sigma
    \]
    by the product formula, so we are left to show that $\prod_\sigma |\beta^4+\gamma^4|_\sigma \geq 2^d = 4$. For a general element $a + b\zeta_4$ in $\O_K$, the binomial theorem shows
    \[
        (a+b\zeta_4)^4 \equiv (a^2+b^2)^2 \mod 4,
    \]
    so that a fourth power in $\O_K$ is congruent to $0$ or $1$ modulo $4$, so that $\beta^4+\gamma^4$ is congruent to $0$, $1$ or $2$ modulo $4$. From considering $\O_K$ as a lattice in the complex plane, it is clear that the only elements of $\O_K$ congruent to $0$, $1$ or $2$ modulo $4$ with norm less than $4$ (i.e. within the circle of radius $2$ centered at the origin) are $0$ and $1$. Suppose for a contradiction that $\prod_\sigma |\beta^4+\gamma^4|_\sigma < 4$. If $\beta^4 +\gamma^4 = 0$, we get $(\beta/\gamma)^4=-1$, so $K$ contains a primitive eighth root of unity, which is a contradiction. If instead
    \[
        (\beta^2-\zeta_4\gamma^2)(\beta^2+\zeta_4\gamma^2) = \beta^4 + \gamma^4 = 1,
    \]
    we get
    \[
        (\beta^2-\zeta_4\gamma^2,\beta^2+\zeta_4 \gamma^2) \in \{(1,1),(-1,-1),(\zeta_4,-\zeta_4),(-\zeta_4,\zeta_4)\}.
    \]
    In the first two cases, subtracting the two factors gives $\gamma=0$, a contradiction. In the last two cases, adding the two factors gives $\beta=0$, also a contradiction.

    \emph{Case 3: $K = \Q(\sqrt{-3})$.} Notice that $w_K=6$. Let $\alpha \in K^\times$. Analogously to the previous case, $\O_K$ is a UFD and we can write $\alpha = \beta/\gamma$ for $\beta,\gamma \in \O_K$ coprime and both nonzero and find
    \[
        \prod_\p \max(1,|\alpha|^{6}_\p)\prod_\sigma |1+\alpha^{6}|_\sigma = \prod_\p |1/\gamma|_\p^6 \prod_\sigma |1/\gamma|^6_\sigma|\beta^6+\gamma^6|_\sigma = \prod_\sigma |\beta^6+\gamma^6|_\sigma
    \]
    by the product formula, so we are left to show that $\prod_\sigma |\beta^6+\gamma^6|_\sigma \geq 2^d = 4$. For a general element $a + b\zeta_3$ in $\O_K$, the binomial theorem shows
    \[
        (a+b\zeta_3)^6 \equiv (a^3+b^3)^2 \mod 3,
    \]
    so that a sixth power in $\O_K$ is congruent to $0$ or $1$ modulo $3$, so that $\beta^6 + \gamma^6$ is congruent to $0$, $1$ or $2$ modulo $3$, so that $\beta^6+\gamma^6 \equiv 0,1,2 \mod 3$. From considering $\O_K$ as a lattice in the complex plane, it is clear that the only elements of $\O_K$ congruent to $0$, $1$ or $2$ modulo $3$ with norm less than $4$ (i.e. within the circle of radius $2$ centered at the origin) are $-1$, $0$ and $1$. Suppose for a contradiction that $\prod_\sigma |\beta^6+\gamma^6|_\sigma < 4$. If $\beta^6 +\gamma^6 = 0$, we get $(\beta/\gamma)^6=-1$, so $K$ contains a root of unity of order not dividing $6$, which is a contradiction. Otherwise we have
    \[
        (\beta^2+\gamma^2)(\beta^2+\zeta_3\gamma^2)(\beta^2+\zeta_3^2\gamma^2) = \beta^6+\gamma^6 = \pm1.
    \]
    We conclude each factor on the left is equal to $\pm 1, \pm \zeta_3, \pm \zeta_3^2$, since those are the units in $\O_K$. There must be two factors that have the same sign in our notation, so that they differ by a power of $\zeta_3$. In other words, we find $a,b,c \in \{0,1,2\}$ (with $a \neq c$) such that
    \[
    \beta^2+\zeta_3^a\gamma^2 = \zeta_3^b(\beta^2+\zeta^c\gamma^2) = \zeta_3^b\beta^2+\zeta_3^{b+c}\gamma^2,
    \]
    so
    \[
    (1-\zeta_3^b)\beta^2 = (\zeta_3^{b+c}-\zeta_3^a)\gamma^2.
    \]
    From here, it is not hard to check that either precisely one of the sides of this equation is $0$ (not both, since $a \neq c$), so that one of $\beta$ and $\gamma$ is $0$, which is a contradiction, or cubing the equation (possibly after dividing by $1-\zeta_3$) yields $\beta^6 = \pm \gamma^6$, which contradicts the fact their sum is $\pm 1$. This settles this case.

    Finally, we want to show that we do not have $C_K = \frac{w_K}{\log 2}$ for every number field $K$. This follows from Lemma \ref{lowerbound} for many choices of the polynomial $f$. In particular, we can take
    \[
        x^8 + 2 x^6 - 3 x^4 - 4 x^2 + 4 = \left((x-1)(x+1)(x-\sqrt{-2})(x+\sqrt{-2})\right)^2,
    \]
    which splits over $K=\Q(\sqrt{-2})$ and shows $C_K \geq \frac{8}{\log 14} > \frac{2}{\log 2} = \frac{w_K}{\log 2}$. Taking higher powers of $(x-1)(x+1)(x-\sqrt{-2})(x+\sqrt{-2})$ as $f$ yields better lower bounds, but of course requires more computation.
\end{proof}

\begin{remark}
    There could be more number fields $K$ for which the above proof strategy of analysing the right hand side of the bound in Lemma \ref{alphabound2} to find a good bound on $C_K$ could work, although it will likely be considerably harder than in the above cases. We would like to point out that the strategy does not work for the other quadratic number fields, i.e. $K=\Q(\sqrt{-d})$ for $d=2$ and squarefree $d \geq 5$. In those cases, we have $w_K=2$ and we thus want to analyse elements of the form $1+\alpha^2$ for $\alpha \in K^\times$. The theory of Pell's equation gives us nonzero $b, c \in \Z$ such that $b^2-dc^2 = 1$. If we let $\beta=b$ and $\gamma = c\sqrt{-d}$ and $\alpha = \beta/\gamma$ we see
    \[
        \prod_\p \max(1,|\alpha|^{2}_\p)\prod_\sigma |1+\alpha^{2}|_\sigma \leq \prod_\p |1/\gamma|_\p^2 \prod_\sigma |1/\gamma|^2_\sigma|\beta^2+\gamma^2|_\sigma = \prod_\sigma |\beta^2+\gamma^2|_\sigma = \prod_\sigma |b^2-dc^2|_\sigma = 1,
    \]
    so that there is no hope of giving any general bounds on $C_K$ in this way in these cases. We do not immediately see an alternative proof strategy to resolve these problems in a simple way.
\end{remark}

While our calculations are particularly nice when working over a number field $K$, the proofs do not really require this, and we can replace $K$ by a more general subset of $\overline{\Q}$ and still achieve similar bounds. As an example, we sketch the proof of Theorem \ref{t2}.

\begin{proof}[Sketch of proof of Theorem \ref{t2}]
    We first pick a number $w$ such that $\zeta^w = 1$ for all roots of unity of degree at most $k$, e.g. $\lcm\{\ell\mid \phi(\ell) \leq k\}$ suffices. Mimicking the proofs of Lemma \ref{alphabound2} and Corollary \ref{bound2} with $w$ instead of $w_K$ (but where the number field over which the calculations are performed depends on $f$) shows that
    \[
        \log H(f) \geq \frac{r}{w}\log 2 - \log(n+1),
    \]
    where $r$ is the number of roots of $f$ that are roots of unity. Using Proposition \ref{voutier}, we can find find a constant $M$ such that
    \[
        \log H(f) \geq \frac{n-r}{k} \log M - \frac{1}{2}\log(n+1),
    \]
    a bound similar to Corollary \ref{bound2}. Combining these two bounds as in the proof of Theorem {\hyperref[t1(i)]{\ref*{t1}(\ref*{t1(i)})}} gives us the desired constant.
\end{proof}

\section*{Acknowledgements}

This paper is essentially a translated summary of the results in the author's Bachelor's thesis, which was supervised by Gunther Cornelissen at Utrecht University. The author thanks him for his guidance.

\printbibliography

\end{document}